\newtheorem{thm}{Theorem}[section]
\newtheorem{lem}[thm]{Lemma}
\theoremstyle{definition}
\newtheorem{defn}[thm]{Definition}
\theoremstyle{example}
\theoremstyle{remark}
\numberwithin{equation}{section}
\theoremstyle{definition}
\theoremstyle{remark}
\numberwithin{equation}{section}
\newcommand{\h}{\mathcal{H}}
\newcommand{\RR}{\mathbb R}
\newcommand{\SSS}{\mathbb S}
\begin{document}

\title[The Effect of Perturbations of Frames and ...]{The Effect of Perturbations of Frames and Fusion Frames on
Their Redundancies}

\author[ A. Rahimi, G. Zandi and B. Daraby ]{ A. Rahimi, G. Zandi and B. Daraby  }
\address{ Department of Mathematics, University of Maragheh, P. O. Box 55181-83111, Maragheh, Iran.}

\email{rahimi@maragheh.ac.ir}
\address{ Department of Mathematics, University of Maragheh, P. O. Box 55181-83111, Maragheh, Iran.}
\email{zgolaleh@yahoo.com}
\address{ Department of Mathematics, University of Maragheh, P. O. Box 55181-83111, Maragheh, Iran.}
\email{bdaraby@maragheh.ac.ir}

\subjclass[2010]{Primary 42C40; Secondary 41A58, 47A58.}
\keywords{Fusion Frame, Redundancy Function, Perturbation, Infimum Cosine Angle.}
\begin{abstract}
An interesting question about the perturbed sequences
 is: when do they inherit the properties of the original one?  An elegant relation between  frames (fusion frames) and their perturbations is the relation of their redundancies.
  In this paper, we investigate these relationships. Also, we express the redundancy of frames (fusion frames) in terms of  the  cosine angle between some subspaces.
\end{abstract}

\maketitle

\section{Introduction}
Introduced in 1952 by Duffin and Schaeffer \cite{class}, frames as an extension of the concept of
orthonormal bases allowing for redundancy, while still maintaining  stability properties,
are today an standard notion in mathematics and engineering.  Applications range from more theoretical problems
such as the Kadison- Sineger problem \cite{kadison,casazza} and tensor decomposition \cite{tensor} over questions
inspired by applications such as sparse approximation theory \cite{optimally}, quantum mechanics \cite{tight}, inverse scattering problems \cite{numerical} and wave packet systems on finite cyclic groups, fields and Abelian groups \cite{Gha1,Gha2,Gha3}. Recently, due to both necessities from applications
and theoretical goals, generalizations of this framework have been developed, first fusion frames \cite{6},
then operator-valued frames \cite{operator} and g-frames \cite{g-frames}.
While frames and their extensions are by definition stable in the sense of their analysis operator
being continuous and bounded below, applications require stronger forms of stability. Of particular importance
is robustness with respect to perturbations which is typically regarded as a property of the associated
analysis operators. More precisely, in the frame setting, say one would refer to a frame $\Psi=\{\psi_ i\}_{i\in I}$
as being a $\mu$-perturbation of $\Phi=\{\varphi_i\}_{i\in I},$ if $\|T_\Phi-T_\Psi\|\leq\mu,$ where $T$ is the analysis operator.
In the situation of frames, this is a well-studied subject (see, e.g., \cite{stability, perturbation}).\\
\indent An interesting question about the perturbed sequences
 is: when  do they inherit  the properties of the original one. For instance, it is  known that
 if  $\{\varphi_i\}_{i\in I}$ is a Riesz sequence with lower bound $A,$
 the perturbed sequence $\{\psi_i\}_{i\in I}$ is also a Riesz sequence when $\mu<\sqrt{A},$ \cite{10}.
  An elegant relation between a frame and its perturbations is the relation of their redundancies.
 As we know, redundancy appears as a mathematical concept and as a methodology
for signal processing. Recently, the ability of redundant systems to provide
sparse representations has been extensively exploited \cite{3}. In fact, frame
 theory is entirely based on the notion of redundancy.
 This idea can be interesting to find the relation between
 redundancy of a frames (fusion frames) and the redundancy of its perturbations.
 The notion of the redundancy of a frame were introduced in \cite{2} and
 explained  by authors in \cite{R.Z} for fusion frames. For general frames ( also fusion frames), redundancy should give us information, for instance, about orthogonality and tightness
of the frame, about the maximal number of spanning sets and the minimal number of linearly independent sets our
frame can be divided into, and about robustness with respect to erasures. Ideally, in the case of a unit-norm tight frame,
upper and lower redundancies coincide and equal the customary measure of redundancy $\frac{N}{n} $, where $N$ denotes the number of frame vectors and $n$ the dimension of underlying Hilbert space.
\section{Preliminaries}
In this chapter, we state some of the basic definitions and theorems that are needed in chapter 3.
\subsection{A Review of Frames}
\indent Given a separable Hilbert space $\h$ with norm $\|.\|,$ a sequence
 $\Phi=\{\varphi_i\}_{i\in I},$ where $I$ is a countable index set,  is a frame for $\h$ if there exist
 $0<A\leq B$ such that
 $$A{\|x\|}^{2}\leq\sum_{i\in I}{|\langle x,\varphi_i\rangle |}^{2}\leq B{\|x\|}^{2}$$
 for all $x\in \h.$ The constants $A,B$ are called frame bounds.
 If only the right inequality is satisfied, we say that $\Phi=\{\varphi_i\}_{i\in I}$
 is a \textit{Bessel sequence} with Bessel bound $B.$
 To every Bessel sequence $\Phi=\{\varphi_i\}_{i\in I}$,
 we associate the analysis operator $T_\Phi:\h\longrightarrow {l}^{2}(I)$ defined by
 $T_\Phi x=\{\langle  x, \varphi_i\rangle\}_{i\in I}$
  for $x\in \h,$ and the synthesis operator $T^{*}_{\Phi}:{l}^{2}(I)\longrightarrow \h$
  given by $T^{*}_{\Phi} c=\sum_{i\in I} c_i\varphi_i$ for $c=\{c_i\}_{i\in I}\in {l}^{2}(I).$

  If $\Phi=\{\varphi_i\}_{i\in I}$ is a frame for $\h,$ the frame operator defined
   by
   $$S_{\Phi}:\h\longrightarrow \h, \quad S_{\Phi}x=\sum_{i\in I}
   \langle x,\varphi_i\rangle \varphi_i,\quad x\in\h.$$
   This operator is bounded, positive and invertible.
     A sequence $\{\varphi_i\}_{i\in I}\subseteq \h$ is a
   \textit{Riesz basis} for $\h$ if it is complete in $\h$ and if there exist $0<A\leq B$ such that
   for every finite scalar sequence $c=\{c_i\}_{i\in I}$
   one has
   $$A{\|c\|^{2}}_{l^{2}}\leq{\|\sum_{i\in I}c_i\varphi_i\|}^{2}\leq B {\|c\|^{2}}_{l^{2}},$$
   the constants $A$ and $B$ are called \textit{Riesz bounds}.\\
  \indent In this paper, we will work with perturbed sequences.
    More precisely we will use the following notion of perturbation \cite{p.s}.
\begin{defn}
Let $\Phi=\{\varphi_i\}_{i\in I}$ be a sequence in $\h$
and $\mu>0.$ We say that a sequence $\Psi=\{\psi_i\}_{i\in I}$
in $\h$ is a $\mu$-perurbation of $\Phi=\{\varphi_i\}_{i\in I}$
if  for every finite scalar sequence $c=\{c_i\},$
$$\|\sum c_i(\varphi_i-\psi_i)\|\leq\mu\|c\|_{l^{2}}.$$
\end{defn}
 The idea of finding the relation between
 redundancies of a frame (fusion frame) and its perturbations should be interesting.
 At the first, we review the definition of the redundancy function for finite frames.

\begin{defn}\cite{2}\label{red.frame}
Let $\Phi=\{\varphi_i\}_{i=1}^{N}$ be a frame for a
finite dimensional Hilbert space ${\h}^{n}$.
For each $x\in \SSS=\{x\in \h: \|x\|=1\}$, the \textit{redundancy function}
$$\mathcal{R}_\Phi:\SSS\to \RR^+$$ is defined by

$$\mathcal{R}_\Phi(x)=\sum_{i=1}^{N}
\|P_{\langle\varphi_i\rangle}(x)\|^2,\quad x\in\SSS$$
where $\langle\varphi_i\rangle$ denotes the span of $\varphi_i \in \h$ and
 $P_{\langle\varphi_i\rangle}$ denotes the orthogonal projection onto $\langle\varphi_i\rangle.$
 The \emph{upper redundancy} of $\Phi$ is defined by $$\mathcal{R}_{\Phi}^+=\sup_{x\in\SSS}
\mathcal{R}_{\Phi}(x), $$ and the \emph{lower redundancy} of
$\Phi$  by $$\mathcal{R}_{\Phi}^-=\inf_{x\in \SSS}
\mathcal{R}_{\Phi}(x). $$ The frame $\Phi$ has \textit{uniform
redundancy} if $ \mathcal{R}_{\Phi}^-=
\mathcal{R}_{\Phi}^+$.
\end{defn}
For more studies and details on the redundancy of frames, we refer to \cite{2}. The following  theorem  states conditions for perturbed
 sequences to be a frame or a Riesz basis \cite{10}.

\begin{thm} \cite{10}\label{1}
Let $\Phi=\{\varphi_i\}_{i=1}^{N}$ be a sequence in $\h$
and assume that  $\Psi=\{\psi_i\}_{i=1}^{N}$ is a $ \mu$-perturbation of
 $\Phi.$ Then the following holds:\\
If $\Phi$ is a frame (Riesz basis) for $\h$ with frame (Riesz) bounds
 $0< A\leq B$ and $ \mu<\sqrt{A},$
then $ \Psi $   is a frame (Riesz basis) for $\h$ with frame (Riesz) bounds
 $ A{(1-\frac{\mu}{\sqrt{A}})^{2}} $
, $ B{(1+\frac{\mu}{\sqrt{B}})^{2}}. $
\end{thm}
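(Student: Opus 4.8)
The plan is to encode the $\mu$-perturbation hypothesis as a single operator-norm inequality and then read off both conclusions from the triangle inequality. First I would observe that, since the finitely supported sequences are dense in $l^{2}(I)$, the hypothesis $\|\sum_i c_i(\varphi_i-\psi_i)\|\le\mu\|c\|_{l^{2}}$ says precisely that the map $U\colon c\mapsto\sum_i c_i(\varphi_i-\psi_i)$ extends to a bounded operator $U\colon l^{2}(I)\to\h$ with $\|U\|\le\mu$. A direct computation of the adjoint gives $(U^{*}x)_i=\langle x,\varphi_i-\psi_i\rangle$, so $U^{*}x=T_\Phi x-\{\langle x,\psi_i\rangle\}_{i\in I}$ for every $x\in\h$. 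As $\Phi$ is Bessel we have $T_\Phi x\in l^{2}(I)$, and $U^{*}x\in l^{2}(I)$, whence $\{\langle x,\psi_i\rangle\}_{i\in I}\in l^{2}(I)$; thus $\Psi$ is a Bessel sequence with a bounded analysis operator $T_\Psi$, and
$$\|T_\Phi x-T_\Psi x\|_{l^{2}}=\|U^{*}x\|_{l^{2}}\le\mu\|x\|,\qquad x\in\h.$$

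For the frame case I would then combine this with the frame inequality $\sqrt{A}\|x\|\le\|T_\Phi x\|_{l^{2}}\le\sqrt{B}\|x\|$. Writing $T_\Psi x=T_\Phi x-(T_\Phi x-T_\Psi x)$ and applying the triangle inequality in $l^{2}(I)$ gives
$$(\sqrt{A}-\mu)\|x\|\le\|T_\Psi x\|_{l^{2}}\le(\sqrt{B}+\mu)\|x\|,\qquad x\in\h.$$
Since $\mu<\sqrt{A}$ the left-hand coefficient is positive, so squaring is legitimate and, using $(\sqrt{A}-\mu)^{2}=A(1-\mu/\sqrt{A})^{2}$ and $(\sqrt{B}+\mu)^{2}=B(1+\mu/\sqrt{B})^{2}$, we obtain exactly the claimed frame bounds for $\Psi$.

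For the Riesz-basis case I would run the analogous estimate on the synthesis side. The Riesz hypothesis reads $\sqrt{A}\|c\|\le\|\sum_i c_i\varphi_i\|\le\sqrt{B}\|c\|$ for every finite scalar sequence $c$, and adding and subtracting $\sum_i c_i\psi_i$, together with $\|\sum_i c_i(\varphi_i-\psi_i)\|\le\mu\|c\|$, yields $(\sqrt{A}-\mu)\|c\|\le\|\sum_i c_i\psi_i\|\le(\sqrt{B}+\mu)\|c\|$. Hence $\Psi$ satisfies the Riesz inequalities with the stated bounds, and in particular its synthesis operator $T_\Psi^{*}$ is bounded below. It remains to verify that $\Psi$ is complete: a Riesz basis has an invertible synthesis operator with $\|(T_\Phi^{*})^{-1}\|\le 1/\sqrt{A}$, and since $\|T_\Phi^{*}-T_\Psi^{*}\|\le\mu<\sqrt{A}$, the classical Neumann-series perturbation lemma makes $T_\Psi^{*}$ invertible, so $\Psi$ is a Riesz basis. (In the finite-dimensional setting of the paper this is automatic: $N$ vectors obeying a positive lower Riesz bound in an $N$-dimensional space are linearly independent, hence a basis.)

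I expect the only point requiring genuine care to be the reduction in the first paragraph — the density argument together with the identification of $U^{*}$ with the difference $T_\Phi-T_\Psi$ of analysis operators, which is what simultaneously shows $\Psi$ is Bessel and produces the bound $\|T_\Phi-T_\Psi\|\le\mu$ — and, in the Riesz case, the completeness check as opposed to the mere Riesz-sequence inequalities. Everything after the norm bound is a single application of the triangle inequality followed by squaring.
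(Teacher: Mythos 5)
The paper does not prove this statement; it is quoted from Christensen's book and used as a black box. Your argument is correct and is essentially the standard proof of the cited result in the special case $\lambda_1=\lambda_2=0$ of the Casazza--Christensen perturbation theorem: the perturbation hypothesis is exactly $\|T_\Phi^*-T_\Psi^*\|\le\mu$ on the synthesis side, hence $\|T_\Phi-T_\Psi\|\le\mu$ on the analysis side, and the triangle inequality together with $(\sqrt{A}-\mu)^2=A(1-\mu/\sqrt{A})^2$ gives the frame bounds. You are also right to flag the one step that is not a triangle inequality, namely completeness in the Riesz-basis case; your Neumann-series argument for the invertibility of $T_\Psi^*$ (or the dimension count in the finite-dimensional setting the paper actually works in) closes that gap correctly.
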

\subsection{A Review of Fusion Frames}
In this subsection, we briefly recalling the basic definitions
 and notations of \textit{fusion frames}.
We wish to mention that the definition of a fusion frame and its associated
fusion frame operator already appeared in \cite{6} under the label ``frame of subspaces''.
Besides, we set the definition of the redundancy function for fusion frames
which introduced by authors in \cite{R.Z}.
\begin{defn}
Let $\h$ be a  Hilbert space and $I$ be a (finite or infinite) countable index set. Assume that $\{W_i\}_{i\in
I}$ be a sequence of closed subspaces in $\h$ and $\{v_i\}_{i\in I}$ be a family
of weights, i.e., $v_i>0$ for all $i\in I$. We say that the
family $\mathcal{W}=\{(W_i,v_i)\}_{i\in I}$ is a  $fusion$ $frame$ or a $frame$ $of$ $subspaces$ with
respect to $\{v_i\}_{i\in I}$ for $\h$ if there
exist constants $0<A\leq B<\infty$ such that
$$A\|x\|^2\leq\sum_{i\in I}v_i^2\|P_{W_i}(x)\|^2\leq
B\|x\|^2\quad\forall x\in\h,$$ where $P_{W_i}$ denotes the orthogonal
projection onto $W_i,$ for each $i\in I.$ The fusion frame
$\mathcal{W}=\{(W_i,v_i)\}_{i\in I}$ is called  $tight$  if
$A=B$ and $Parseval$ if $A=B=1$.  If all ${v_i}^{,}s$ take the same value $v$, then $\mathcal{W}$ is
called $v$-$uniform$.
Moreover, $\mathcal{W}$ is called an $orthonormal$ $fusion$
 $basis$ for $\h$ if  $\h= \bigoplus_{i\in I} W_i$.
 If $\mathcal{W}=\{(W_i,v_i)\}_{i\in I}$ possesses an
 upper fusion frame bound but not necessarily a lower
 bound, we call it a $Bessel$ $fusion$ $sequence$ with Bessel fusion bound $B$.
 The normalized version of $\mathcal{W}$ is obtained when we choose $v_i=1$ for all $i\in I.$
 Note that we use this term merely when $\{(W_i,1)\}_{i\in I}$ formes a fusion frame for $\h.$
\end{defn}
Without loss of generality,  we may assume that the family
of weights $\{v_i\}_{i\in I}$ belongs to $\ell^{\infty}_+(I) $.\\
 The analysis, synthesis  and fusion
 frame operator are defined as follows.\\ \par
 \textbf{Notation:} For any family $\{\h_i\}_{i\in I}$ of Hilbert spaces, we use the representation space
 \[(\sum_{i\in I}{\oplus \h_i})_{\ell_2}= \left\{ \{f_{i}\}_{i\in I}: f_i\in\h_i,  \sum_{i\in I}\|f_{i}\|^2<\infty
\right\}\]
with inner product
\[
\langle\{f_i\}_{i\in I},\{g_i\}_{i\in I}\rangle=\sum_{i\in I} \langle f_i, g_i\rangle,\quad \{f_{i}\}_{i\in I}, \{g_{i}\}_{i\in I}\in(\sum_{i\in I}{\oplus \h_i})_{\ell_2}
\]
and
\[
\|\{f_i\}_{i\in I}\|:=\sqrt{\sum_{i\in I} \|f_i\|^2}.
\] It is easy to show that $(\sum_{i\in I}{\oplus \h_i})_{\ell_2}$ is a Hilbert space.

 \begin{defn}
 Let $\mathcal{W}=\{(W_i,v_i)\}_{i\in I}$ be a fusion frame for $\h$.
 The $synthesis$ $operator$ $T_{\mathcal{W}}:(\sum_{i\in I}{\oplus W_i})_{\ell_2} \to \h$ is defined by
  $$  T_{\mathcal{W}}(\{f_i\}_{i\in I})= \sum_{i\in I}{v_if_i}, \quad \{f_i\}_{i\in I}\in (\sum_{i\in I}{\oplus W_i})_{\ell_2}.$$
 In order to map a signal to the representation space, i.e., to Analyze it,
  the $analysis$ $operator$ $T_{\mathcal{W}}^{*}$ is employed which is defined by
  $$T_{\mathcal{W}}^{*}: \h \to (\sum_{i\in I}{\oplus W_i})_{\ell_2} \quad with \quad
  T_{\mathcal{W}}^{*}(f)=\{v_i P_{{W_i}}(f)\}_{i\in I},$$ for any $f\in\h$. The $fusion$ $frame$ $operator$ $S_{\mathcal{W}}$ for $\mathcal{W}$
  is defined by
  $$S_{\mathcal{W}}(f)=T_{\mathcal{W}}T_{\mathcal{W}}^{*}(f)=\sum_{i\in I} v^{2}_i P_{W_i}(f),\quad f\in\h.$$
 \end{defn}
 It follows from \cite{6} that for each fusion frame, the operator $S_{\mathcal{W}}$  is invertible,  positive and $AI\leq S_{\mathcal{W}}\leq BI$. Any $f\in\h$ has the representation $f=\sum_{i\in I} v^{2}_i S_{\mathcal{W}}^{-1}P_{W_i}(f)$.\\
 Now, we set the definition of perturbation of a fusion frame which considered in \cite{Fusion Frames}
 and it was proved that the fusion frames are stable under these perturbations.
 \begin{defn}\cite{Fusion Frames}\label{fusion pert}
Let $\{W_i\}_{i\in I}$ and $\{\tilde {W}_i\}_{i\in I}$
 be families of closed subspaces in ${\h},$
let $\{v_i\}_{i\in I} $ be positive numbers,
 $0\leq {\lambda}_{1},{\lambda}_{2}\lneqq1,$
and $\varepsilon>0.$  If
$$\|(P_{W_{i}}-P_{\tilde {W}_i})f\|\leq \lambda_{1}
\|P_{W_{i}}f\|+ \lambda_{2}\|P_{\tilde{W}_i}f\|+\varepsilon\|f\|\quad f\in \h,$$
then, we say that  $\{(\tilde {W}_i,v_i)\}_{i\in I} $
 is a $({\lambda}_{1},{\lambda}_{2},\varepsilon)$- perturbation
 of $\{(W_i,v_i)\}_{i\in I} .$
\end{defn}
A different notion of perturbation of fusion frames was introduced
in \cite{the effect}, which is equivalent to the above notion of perturbation in finite dimensional case.
\begin{defn}\cite{the effect}\label{fusion pert2}
Let $\mu>0$ and let $\mathcal{W}=\{(W_i,w_i)\}_{i=1}^{N}$ and
 $\mathcal{V}=\{(V_i,v_i)\}_{i=1}^{N}$ be two Bessel fusion sequences
  in $\h.$ We say that $\mathcal{V}$ is a $\mu$-perturbation of $\mathcal{W}$
  (and vice versa) if $(v_iP_{V_i})_{i=1}^{N}$
is a $\mu$-perturbation of $(w_iP_{W_i})_{i=1}^{N}$ or
 $\|T_{\mathcal{W}}-T_{\mathcal{V}}\|\leq\mu$ and so
$$\|w_iP_{W_i}-v_iP_{V_i}\|\leq\mu.$$
\end{defn}
 We state the Proposition 5.2 from \cite{Fusion Frames} for finite dimensional Hilbert space
 ${\h}^{n}$ by assuming that ${\lambda}_{1}={\lambda}_{2}=0$  and
  $\varepsilon=\mu.$
\begin{thm}\cite{Fusion Frames}\label{**}
Let  $\mathcal{W}=\{W_i\}_{i=1}^{N}$ be a fusion frame for
 ${\h}^{n}$ with bounds $A, B$ and $\mu>0$ such that
 $\sqrt{A}-\mu\sqrt{N}>0.$ Further, let $\{(V_i,v_i)\}_{i=1}^{N}$
  be a $\mu$-perturbation of $W$. Then $\{(V_i,v_i)\}_{i=1}^{N}$
   is a fusion frame with fusion frame bounds
 $$(\sqrt{A}-\mu\sqrt{N})^{2}, \quad (\sqrt{B}+\mu\sqrt{N})^{2}.$$
\end{thm}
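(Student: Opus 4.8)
The strategy is to compare $\mathcal{W}$ and $\{(V_i,v_i)\}_{i=1}^{N}$ through their analysis operators, realized inside a single common Hilbert space, and to push the fusion–frame inequalities through one triangle–inequality estimate. First I would record the pointwise content of the perturbation hypothesis: since $\lambda_1=\lambda_2=0$ and $\varepsilon=\mu$, for each $i\in\{1,\dots,N\}$ one has the operator–norm bound $\|P_{W_i}-v_iP_{V_i}\|\le\mu$ on $\h^n$ (this is literally part of Definition~\ref{fusion pert2}, the weight of $\mathcal{W}$ being $1$; it also follows from Definition~\ref{fusion pert} with $\lambda_1=\lambda_2=0$, $\varepsilon=\mu$). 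Next, set $K=\bigl(\sum_{i=1}^{N}\oplus\,\h^n\bigr)_{\ell_2}$ and define $U_{\mathcal{W}},U_{\mathcal{V}}\colon\h^n\to K$ by $U_{\mathcal{W}}x=\{P_{W_i}x\}_{i=1}^{N}$ and $U_{\mathcal{V}}x=\{v_iP_{V_i}x\}_{i=1}^{N}$. Then $\|U_{\mathcal{W}}x\|^2=\sum_{i=1}^{N}\|P_{W_i}x\|^2$ and $\|U_{\mathcal{V}}x\|^2=\sum_{i=1}^{N}v_i^2\|P_{V_i}x\|^2$, so the fusion–frame inequality for $\mathcal{W}$ reads $\sqrt A\,\|x\|\le\|U_{\mathcal{W}}x\|\le\sqrt B\,\|x\|$, and the theorem is equivalent to the estimate $(\sqrt A-\mu\sqrt N)\|x\|\le\|U_{\mathcal{V}}x\|\le(\sqrt B+\mu\sqrt N)\|x\|$ for all $x\in\h^n$.

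With this setup the argument is short. By the first step,
\[
\|U_{\mathcal{W}}x-U_{\mathcal{V}}x\|^2=\sum_{i=1}^{N}\|(P_{W_i}-v_iP_{V_i})x\|^2\le N\mu^2\|x\|^2 ,
\]
so $\|U_{\mathcal{W}}x-U_{\mathcal{V}}x\|\le\mu\sqrt N\,\|x\|$. The triangle inequality and its reverse in $K$ give $\|U_{\mathcal{V}}x\|\le\|U_{\mathcal{W}}x\|+\mu\sqrt N\,\|x\|\le(\sqrt B+\mu\sqrt N)\|x\|$ and $\|U_{\mathcal{V}}x\|\ge\|U_{\mathcal{W}}x\|-\mu\sqrt N\,\|x\|\ge(\sqrt A-\mu\sqrt N)\|x\|$. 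Squaring—legitimate because the hypothesis $\sqrt A-\mu\sqrt N>0$ keeps the lower bound nonnegative—yields $(\sqrt A-\mu\sqrt N)^2\|x\|^2\le\sum_{i=1}^{N}v_i^2\|P_{V_i}x\|^2\le(\sqrt B+\mu\sqrt N)^2\|x\|^2$, which is exactly the claim. Note that the upper estimate alone already shows $\{(V_i,v_i)\}$ is a Bessel fusion sequence, and in $\h^n$ the subspaces $V_i$ are automatically closed, so nothing further needs checking.

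The one place that requires care is the bookkeeping of the first two steps: correctly extracting the clean bound $\|P_{W_i}-v_iP_{V_i}\|\le\mu$ from whichever form of the perturbation definition one adopts, and—more importantly—comparing $U_{\mathcal{W}}$ and $U_{\mathcal{V}}$ inside the single space $K$ rather than in the a priori different spaces $(\sum\oplus W_i)_{\ell_2}$ and $(\sum\oplus V_i)_{\ell_2}$ on which the operators $T_{\mathcal{W}},T_{\mathcal{V}}$ of the Preliminaries live. Once that identification is made there is no analytic obstacle left; the computation is the same two–line triangle–inequality argument that underlies Theorem~\ref{1} for ordinary frames, the extra factor $\sqrt N$ appearing precisely because here the perturbation is controlled coordinatewise over the $N$ indices rather than uniformly on $\ell^2$.
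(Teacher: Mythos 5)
Your proof is correct. Note, however, that the paper itself offers no proof of this statement: it is quoted verbatim (after specializing $\lambda_1=\lambda_2=0$, $\varepsilon=\mu$, $v_i\equiv 1$) from Proposition~5.2 of the cited work of Casazza, Kutyniok and Li, so there is nothing internal to compare against. Your argument --- extract the coordinatewise bound $\|(P_{W_i}-v_iP_{V_i})x\|\le\mu\|x\|$, sum over $i$ to get the $\mu\sqrt N$ deviation between the two analysis maps in a common $\ell_2$-direct-sum space, and finish with the triangle inequality and its reverse --- is exactly the standard proof of that cited proposition in this special case, and your remark about working in $\bigl(\sum\oplus\,\h^n\bigr)_{\ell_2}$ rather than the two different spaces $\bigl(\sum\oplus W_i\bigr)_{\ell_2}$ and $\bigl(\sum\oplus V_i\bigr)_{\ell_2}$ correctly identifies the one point of bookkeeping the paper's definitions leave ambiguous. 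The only caveat worth flagging is the same one you flag yourself: Definition~\ref{fusion pert} (no weights on the projections) and Definition~\ref{fusion pert2} (weights included) yield the same per-index bound only when $v_i=1$, which is evidently the intended reading here.
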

One of our goals is to investigate the effect of perturbation of fusion frames
on the redundancy.
\begin{defn}\cite{R.Z}\label{r.z}
Let $\mathcal{W}= \{(W_i, v_i)\}_{i=1}^{N}$ be a fusion frame for ${\h}^{n}$
with bounds $A$ and $B.$
For each $x\in \SSS$, the $redundancy$ $function$
$\mathcal{R}_{\mathcal{W}}:\SSS\to \RR^+$ is defined by $$
\mathcal{R}_{\mathcal{W}}(x)=\sum_{i=1}^{N}
\|P_{W_i}(x)\|^2,\quad x\in\SSS
.$$

\end{defn}
Notice that this notion is reminiscent of the definition of redundancy function for finite frames, in Definition 1.2, if
$\dim W_i=1$ for $i=1,...,N.$
\begin{defn}
For the fusion frame $\mathcal{W}= \{(W_i, v_i)\}_{i=1}^{N},$ the
\emph{upper redundancy}  is defined by
$$\mathcal{R}_{\mathcal{W}}^+=\sup_{x\in\SSS}
\mathcal{R}_{\mathcal{W}}(x), $$ and the \emph{lower redundancy} of
$\mathcal{W}$  by $$\mathcal{R}_{\mathcal{W}}^-=\inf_{x\in \SSS}
\mathcal{R}_{\mathcal{W}}(x). $$ We say that $\mathcal{W}$ has $uniform$
redundancy if $ \mathcal{R}_{\mathcal{W}}^-=
\mathcal{R}_{\mathcal{W}}^+$.
\end{defn}
\section{The Effect of Perturbations of Frames on Their Redundancies}
In the definition of the redundancy function for the finite frame
  $\Phi=\{\varphi_i\}_{i=1}^{N}$  the optimal bounds of the
  normalized version of  $\Phi$ considered as the lower and upper redundancies \cite{2}. It is easy to prove the following lemma:
\begin{lem}\label{2}
Let $\Phi=\{\varphi_i\}_{i=1}^{N}$ be a frame for  ${\h}^{n}$
 and   $\Psi=\{\psi_i\}_{i=1}^{N}$ be a
$\mu$-perturbation of $\Phi$ such that  $ \|\varphi_i\|=\|\psi_i\|$
for all $i=1,...,N.$ Then the normalized version of
$\Psi$ is also a $\mu$-perturbation of the normalized version of $\Phi.$
\end{lem}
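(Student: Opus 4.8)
The idea is that, because $\|\varphi_i\| = \|\psi_i\|$, passing from $\Phi$ and $\Psi$ to their normalized versions amounts, index by index, to multiplying the $i$-th terms by one and the same scalar $\|\varphi_i\|^{-1}$, and that the defining inequality of a $\mu$-perturbation is stable under such a per-index rescaling. Concretely, set $\alpha_i := \|\varphi_i\| = \|\psi_i\|$; the notion of a normalized version presupposes $\varphi_i \neq 0$, whence $\alpha_i > 0$ and also $\psi_i \neq 0$ (since $\|\psi_i\| = \|\varphi_i\|$), so $\tilde\Phi = \{\varphi_i/\alpha_i\}_{i=1}^N$ and $\tilde\Psi = \{\psi_i/\alpha_i\}_{i=1}^N$ are well defined, and
\[
\tilde\varphi_i - \tilde\psi_i = \frac{1}{\alpha_i}\,(\varphi_i - \psi_i), \qquad i = 1,\dots,N.
\]
It is precisely the equal-norm hypothesis that makes this difference a scalar multiple of $\varphi_i - \psi_i$; without it the argument would not get off the ground.

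Next I would fix an arbitrary finite scalar sequence $d = \{d_i\}_{i=1}^N$ and write
\[
\sum_{i=1}^N d_i\,(\tilde\varphi_i - \tilde\psi_i) = \sum_{i=1}^N \frac{d_i}{\alpha_i}\,(\varphi_i - \psi_i).
\]
Since $c := \{d_i/\alpha_i\}_{i=1}^N$ is again a finite scalar sequence, the defining inequality of a $\mu$-perturbation of $\Phi$ by $\Psi$ applies to $c$ and yields
\[
\Bigl\|\sum_{i=1}^N d_i\,(\tilde\varphi_i - \tilde\psi_i)\Bigr\| \;\le\; \mu\Bigl(\sum_{i=1}^N \frac{|d_i|^2}{\alpha_i^2}\Bigr)^{1/2}.
\]

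The step I expect to be the real obstacle is the final comparison. To conclude that $\tilde\Psi$ is a $\mu$-perturbation of $\tilde\Phi$ one needs the right-hand side above to be at most $\mu\|d\|_{\ell^2} = \mu\bigl(\sum_i |d_i|^2\bigr)^{1/2}$, and $\bigl(\sum_i |d_i|^2/\alpha_i^2\bigr)^{1/2} \le \bigl(\sum_i |d_i|^2\bigr)^{1/2}$ holds for every $d$ exactly when $\alpha_i \ge 1$ for each $i$ — in particular when $\Phi$ is unit-norm, which is the natural setting of the redundancy function in Definition~\ref{red.frame}. Under that standing assumption the estimate collapses to $\bigl\|\sum_i d_i(\tilde\varphi_i - \tilde\psi_i)\bigr\| \le \mu\|d\|_{\ell^2}$, i.e. $\tilde\Psi$ is a $\mu$-perturbation of $\tilde\Phi$, which is the assertion; for a general frame the same computation produces instead a $\bigl(\mu/\min_i\|\varphi_i\|\bigr)$-perturbation. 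Everything except this rescaling of the $\ell^2$-norm is routine manipulation of the two displayed identities.
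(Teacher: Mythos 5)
Your computation is, step for step, the computation in the paper's own proof: write $\tilde\varphi_i-\tilde\psi_i=\alpha_i^{-1}(\varphi_i-\psi_i)$ with $\alpha_i=\|\varphi_i\|=\|\psi_i\|$, absorb the factors $\alpha_i^{-1}$ into the coefficients, and apply the perturbation inequality to the rescaled sequence $c'=\{c_i/\alpha_i\}$. The difference is what happens at the end. The paper stops at the bound $\mu\|c'\|_{\ell^2}$ and declares the lemma proved, whereas you correctly point out that the definition of a $\mu$-perturbation demands the bound $\mu\|c\|_{\ell^2}$ in terms of the \emph{original} coefficients, and that $\|c'\|_{\ell^2}\le\|c\|_{\ell^2}$ for all $c$ holds exactly when every $\alpha_i\ge 1$. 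So you have not overlooked anything; you have located a genuine gap in the paper's argument. Moreover the gap is not repairable, because the lemma is false as stated: take $N=1$ in $\RR$ with $\varphi_1=\varepsilon$ and $\psi_1=-\varepsilon$ for small $\varepsilon>0$. Then the norms agree and $\Psi$ is a $2\varepsilon$-perturbation of $\Phi$, but the normalized vectors are $1$ and $-1$, which form a $2$-perturbation and nothing smaller. The correct general conclusion is the one you extract, namely that the normalized version of $\Psi$ is a $\bigl(\mu/\min_i\alpha_i\bigr)$-perturbation of the normalized version of $\Phi$, which reduces to the stated claim precisely when all $\alpha_i\ge 1$ (in particular for unit-norm frames, the natural setting for the redundancy function of Definition~\ref{red.frame}). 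With that corrected constant, your argument is complete; note that the quantitative conclusions of Theorem~\ref{3}, which rely on this lemma, inherit the same caveat.
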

\begin{proof}
The  sequence $\Psi=\{\psi_i\}_{i=1}^{N}$ is a $\mu$-perturbation of the frame $\Phi,$ so we have
$$\|\sum c_k(\varphi_i-\psi_i)\|\leq \mu {\|c\|}_{l^{2}}$$
for all finite sequence $c=\{c_k\}.$ By assumption, we have $\|\varphi_i\|
=\|\psi_i\|=\alpha_i$ for $i=1,...,N.$ Hence
for all finite scalar sequence  $c=\{c_k\},$
$$\|\sum c_k(\frac{\varphi_i}{\|\varphi_i\|}-\frac{\psi_i}{\|\psi_i\|})\|=
\|\sum c_k(\frac{\varphi_i}{\alpha_i}-\frac{\psi_i}{\alpha_i})\|=
\|\sum c'_k(\varphi_i-\psi_i)\|.$$
By the definition of the perturbation of a frame we have
$$\|\sum c'_k(\varphi_i-\psi_i)\|\leq \mu {\|c'\|}_{l^{2}}.$$
Therefore, the normalized version of  $\Psi$ is a $\mu$-perturbation
of the normalized version of  $\Phi.$
\end{proof}
By using the above lemma, we get the following theorem.
\begin{thm}\label{3}
Let $\Phi=\{\varphi_i\}_{i=1}^{N}$ be a frame for ${\h}^{n}$
with bounds $0<A\leq B$ and $\mu<\sqrt{A}.$
 Let   $\Psi=\{\psi_i\}_{i=1}^{N}$ be a
$\mu$-perturbation of $\Phi$ such that  $ \|\varphi_i\|={\|\psi_i\|}$ for all $i=1,...,N.$
Let $\mathcal{R}_{\Phi}^-$ and  $\mathcal{R}_{\Phi}^+$ denote
the lower and upper redundancies for the frame
$\Phi,$ respectively.
Then $\Psi$ is also a frame for   ${\h}^{n}$and
 the lower and upper redundancies for the frame $\Psi$ are as follows:
$$\mathcal{R}_{\Psi}^-=\mathcal{R}_{\Phi}^-
{(1-\frac{\mu}{\sqrt{\mathcal{R}_{\Phi}^-}})}^{2},
\quad \mathcal{R}_{\Phi}^-=\mathcal{R}_{\Psi}^+
{(1+\frac{\mu}{\sqrt{\mathcal{R}_{\Phi}^+}})}^{2}.$$
Moreover, if $\Phi=\{\varphi_i\}_{i=1}^{N}$ is a Riesz basis, then
$$\mathcal{R}_{\Phi}^-=\mathcal{R}_{\Psi}^-=\mathcal{R}_{\Psi}^+=\mathcal{R}_{\Phi}^+=1.$$
\end{thm}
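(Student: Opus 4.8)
The plan is to reduce the entire statement to a perturbation statement about the \emph{normalized} versions of $\Phi$ and $\Psi$, and then to feed Lemma \ref{2} into Theorem \ref{1}. The one computation that drives everything is that $\langle\varphi_i\rangle$ is one-dimensional, so $\|P_{\langle\varphi_i\rangle}(x)\|^2=\bigl|\langle x,\varphi_i/\|\varphi_i\|\rangle\bigr|^2$; hence, writing $\widetilde\Phi=\{\varphi_i/\|\varphi_i\|\}_{i=1}^{N}$ for the normalized version of $\Phi$,
$$\mathcal{R}_\Phi(x)=\sum_{i=1}^{N}\bigl|\langle x,\varphi_i/\|\varphi_i\|\rangle\bigr|^2=\langle S_{\widetilde\Phi}x,x\rangle,\qquad x\in\SSS .$$
Consequently $\mathcal{R}_\Phi^-$ and $\mathcal{R}_\Phi^+$ are exactly the optimal (tightest) lower and upper frame bounds of $\widetilde\Phi$, and in the same way $\mathcal{R}_\Psi^-,\mathcal{R}_\Psi^+$ are the optimal frame bounds of the normalized version $\widetilde\Psi$ of $\Psi$. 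So everything to be proved is a statement about how the optimal bounds of $\widetilde\Phi$ move when one passes to $\widetilde\Psi$.

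I would then proceed as follows. First, $\Psi$ is a frame for $\h^n$: this is Theorem \ref{1} applied to $\Phi$ (a frame with bounds $A\le B$) and its $\mu$-perturbation $\Psi$, with $\mu<\sqrt A$. Second, since $\|\varphi_i\|=\|\psi_i\|$ for every $i$, Lemma \ref{2} shows $\widetilde\Psi$ is a $\mu$-perturbation of $\widetilde\Phi$. Third, apply Theorem \ref{1} a second time, now to the pair $(\widetilde\Phi,\widetilde\Psi)$: by the first paragraph $\widetilde\Phi$ has optimal frame bounds $\mathcal{R}_\Phi^-\le\mathcal{R}_\Phi^+$, so, provided $\mu<\sqrt{\mathcal{R}_\Phi^-}$, the theorem gives that $\widetilde\Psi$ is a frame with bounds $\mathcal{R}_\Phi^-\bigl(1-\mu/\sqrt{\mathcal{R}_\Phi^-}\bigr)^{2}$ and $\mathcal{R}_\Phi^+\bigl(1+\mu/\sqrt{\mathcal{R}_\Phi^+}\bigr)^{2}$. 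Reading these numbers back through the identification of the first paragraph produces the two displayed formulas for $\mathcal{R}_\Psi^-$ and $\mathcal{R}_\Psi^+$ (the second one read in its evidently intended form $\mathcal{R}_\Psi^+=\mathcal{R}_\Phi^+\bigl(1+\mu/\sqrt{\mathcal{R}_\Phi^+}\bigr)^{2}$). For the Riesz-basis addendum: if $\Phi$ is a Riesz basis then $N=n$ and its normalized version $\widetilde\Phi$ is an orthonormal basis, so $\mathcal{R}_\Phi\equiv 1$ on $\SSS$ and $\mathcal{R}_\Phi^-=\mathcal{R}_\Phi^+=1$; by the Riesz part of Theorem \ref{1}, $\Psi$ is again a Riesz basis, and the same computation for $\widetilde\Psi$ gives $\mathcal{R}_\Psi^-=\mathcal{R}_\Psi^+=1$, hence $\mathcal{R}_\Phi^-=\mathcal{R}_\Psi^-=\mathcal{R}_\Psi^+=\mathcal{R}_\Phi^+=1$.

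Two points need care, and the second is the genuine obstacle. The minor one: to invoke Theorem \ref{1} in the third step one needs $\mu<\sqrt{\mathcal{R}_\Phi^-}$, which is formally stronger than the hypothesis $\mu<\sqrt A$; it does follow if one also knows $A\le\mathcal{R}_\Phi^-$ — for instance when all $\varphi_i$ lie in the closed unit ball, since then $\mathcal{R}_\Phi(x)\ge\sum_i|\langle x,\varphi_i\rangle|^2\ge A$ — and in general this is the condition one really wants in the hypothesis. The substantive issue is that Theorem \ref{1} only certifies \emph{some} pair of frame bounds for a perturbed system, not the optimal pair, whereas $\mathcal{R}_\Psi^\pm$ are \emph{defined} to be the optimal bounds of $\widetilde\Psi$; so what the argument above rigorously delivers is the two-sided estimate
$$\mathcal{R}_\Phi^-\Bigl(1-\frac{\mu}{\sqrt{\mathcal{R}_\Phi^-}}\Bigr)^{2}\ \le\ \mathcal{R}_\Psi^-\ \le\ \mathcal{R}_\Psi^+\ \le\ \mathcal{R}_\Phi^+\Bigl(1+\frac{\mu}{\sqrt{\mathcal{R}_\Phi^+}}\Bigr)^{2},$$
with the asserted equalities to be understood as the worst-case values furnished by the perturbation estimate of Theorem \ref{1} (and, likewise, the Riesz-basis claim rests on reading the normalized version of a Riesz basis as an orthonormal basis). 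Establishing this chain of inequalities, and deciding how tightly it can be read, is where the real work lies.
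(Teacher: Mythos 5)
Your proposal follows essentially the same route as the paper: identify $\mathcal{R}_{\Phi}^{-},\mathcal{R}_{\Phi}^{+}$ with the optimal frame bounds of the normalized frame, use Lemma \ref{2} to transfer the $\mu$-perturbation to the normalized systems, and apply Theorem \ref{1} to that pair. The two caveats you raise --- that the second application of Theorem \ref{1} needs $\mu<\sqrt{\mathcal{R}_{\Phi}^{-}}$ rather than the hypothesized $\mu<\sqrt{A}$, and that Theorem \ref{1} certifies only \emph{some} frame bounds for the perturbed system so the argument really yields a two-sided estimate rather than the asserted equalities --- are genuine, and the paper's own proof passes over both points silently.
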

\begin{proof}
Let $\Phi=\{\varphi_i\}_{i=1}^{N}$ be a frame for ${\h}^{n}$
with bounds $0<A\leq B$ and $\mu<\sqrt{A}.$
 Let   $\Psi=\{\psi_i\}_{i=1}^{N}$ be a
$\mu$-perturbation of $\Phi.$
Then by Theorem \ref{1},  $\Psi$ is also a frame for   ${\h}^{n}$ with bounds
$$ A{(1-\frac{\mu}{\sqrt{A}})^{2}} \quad
,  B{(1+\frac{\mu}{\sqrt{B}})^{2}}.$$
 By assumption, we have $\|\varphi_i\|
=\|\psi_i\|=\alpha_i$ for $i=1,...,N.$ Then by Lemma \ref{2},
the normalized version of
$\Psi$ is also a $\mu$-perturbation of the normalized version of $\Phi.$
We know that the redundancy function \cite{2} for a frame
 consider the normalized version of the frame and lower and upper redundancies coincide
 the optimal lower and upper frame bounds of the normalized version of the original frame. So
 the frame bounds of the frame $\{\frac{\varphi_i}{\|\varphi_i\|}\}_{i=1}^{N}$
 equal to $\mathcal{R}_{\Phi}^-$ and $\mathcal{R}_{\Phi}^+.$
 Similar statement holds for redundancies of the frame  $\Psi.$
 Hence , we conclude
 $$\mathcal{R}_{\Psi}^-=\mathcal{R}_{\Phi}^-
{(1-\frac{\mu}{\sqrt{\mathcal{R}_{\Phi}^-}})}^{2},
\quad \mathcal{R}_{\Psi}^-=\mathcal{R}_{\Phi}^+
{(1+\frac{\mu}{\sqrt{\mathcal{R}_{\Phi}^+}})}^{2}.$$
 For the
  moreover part, we know \cite{2} that the redundancy of each Riesz basis equals to one. Hence by Theorem \ref{1} the sequence
     $\Psi=\{\psi_i\}_{i=1}^{N}$ is a Riesz basis, so
$$\mathcal{R}_{\Phi}^-=\mathcal{R}_{\Phi}^+=
\mathcal{R}_{\Psi}^-=\mathcal{R}_{\Psi}^+=1.$$
\end{proof}
By extra hypothesists $\mathcal{R}_{\Phi}^-,\mathcal{R}_{\Phi}^+<\infty$ on the lower and upper
 redundancies of
the frame $\Phi,$ above theorem extended to the infinite case.
We can express the lower and upper redundancies of a
frame in terms of cosine angle between some subspaces.
At the first we review the definition of sine and cosine between subspaces \cite{p.s}.
\begin{defn}\cite{p.s}\label{sine}
Assuming that $V$ and $W$ are subspaces of $\h$ and
 $V\neq 0.$ The angle from $V$ to $W$
is defined as the unique number $\theta(V,W)\in [0,\frac{\pi}{2}]$ for which
\begin{equation}\label{e1}
\operatorname{cos}\theta(V,W)=\inf_{{f\in V},{ \|f\|=1}}\|P_W(f)\|.
\end{equation}

We will frequently use the notion
$$R(V,W)=\operatorname{cos}\theta(V,W),$$which is
 called the infimum cosine angle.
If we take the supremum  instead of the infimum of the right hand side of (\ref{e1}),
we obtain the supremum cosine angle $S(V,W)$ of $V$ and $W,$ that is
$$ S(V,W)=\sup_{{f\in V},{ \|f\|=1}}\|P_W(f)\|. $$
$R$ and $S $ are related by
$$R(V,W)=(1-{S}^{2}(V,{W}^{\perp}))^{\frac{1}{2}}.$$
The \textit{gap} of subspace $V$ to $W$
 is defined by
 $$\delta(V,W)=\sup_{{x\in V},{ \|x\|=1}}dist (x,W)=
 \sup_{{f\in V},{ \|f\|=1}}\inf_{y\in W}\|x-y\|.$$
 An elementary calculation shows that the gap of $V$ to $W$ are related via
 $$\delta(V,W)=\operatorname{sin}\theta(V,W).$$
\end{defn}

By the properties of  \textit{inf} and  \textit{sup} and applying
the definitions of infimum cosine and supremum cosine  in the
 Definition \ref{sine}, we  have this elegant relations:\\
$$\mathcal{R}_{\Phi}^-=\sum_{i=1}^{N
}R^{2}(W',\langle\varphi_i\rangle),\quad
\mathcal{R}_{\Phi}^+=\sum_{i=1}^{N}
S^{2}(W',\langle\varphi_i\rangle),$$
\\where $W'$ is a subspace of $\h$ which contains the unit sphere $\SSS$, i.e., $\SSS\subseteq W'.$
Therefore in the Theorem \ref{3}, we have
$$\mathcal{R}_{\Psi}^-=\sum_{i=1}^{N
}R^{2}(W',\langle\varphi_i\rangle) (1-\frac{\mu}
{\sqrt{\sum_{i=1}^{N}R^{2}(W',\langle\varphi_i\rangle)}})^{2},$$
and for upper redundancy,
$$\mathcal{R}_{\Psi}^+=\sum_{i=1}^{N}
{S}^{2}(W',\langle\varphi_i\rangle){(1+\frac{\mu}{\sqrt{\sum_{i=1}^{N}S^{2}(W',\langle\varphi_i\rangle)}})}^{2}.$$
The above relations can be expressed in terms of the gap between
 the subspaces $W'$ and
$\langle\varphi_i\rangle,$
because we have
$$\delta (W',\langle \varphi_i \rangle)=\sqrt{1-{R}^{2}
(W',\langle \varphi_i\rangle)}.$$
\section{The Effect of Perturbations of Fusion Frames on Their Redundancies}

In the definition of the redundancy function for fusion frames \cite{R.Z},
 we consider lower and upper bounds of the normalized version of a fusion frame
 as lower and upper redundancies.

In the following theorem we investigate the relationship between
the redundancy of a fusion frame and the redundancy of its perturbations.
 We assume that each subspace has a weight equal 1.
\begin{thm}
Let  $\mathcal{W}=\{W_i\}_{i=1}^{N}$ be a fusion frame for
 ${\h}^{n}$ with bounds $A, B$ and $\mu>0$ such that
 $\sqrt{A}-\mu\sqrt{N}>0.$ Let $\mathcal{V}=\{V_i,\}_{i=1}^{N}$
  be a $\mu$-perturbation of $\mathcal{W}.$ Then $\mathcal{V}$
   is a fusion frame for  ${\h}^{n}$ and the following relations
 between lower and upper redundancies of $\mathcal{W}$ and
$\mathcal{V}$
hold:
$$\mathcal{R}_{\mathcal{V}}^-=(\sqrt{\mathcal{R}_{\mathcal{W}}^-}-\mu\sqrt{N})^{2},\quad
\mathcal{R}_{\mathcal{V}}^+=(\sqrt{\mathcal{R}_{\mathcal{W}}^+}+\mu\sqrt{N})^{2}. $$
\end{thm}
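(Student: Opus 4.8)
The plan is to imitate the proof of Theorem~\ref{3}, with the fusion-frame perturbation statement Theorem~\ref{**} playing the role that Theorem~\ref{1} had there. The first point is that, since every weight equals $1$, for each $x\in\SSS$ one has $\mathcal{R}_{\mathcal{W}}(x)=\sum_{i=1}^{N}\|P_{W_i}(x)\|^{2}=\|T_{\mathcal{W}}^{*}x\|^{2}$, so that $\mathcal{R}_{\mathcal{W}}^{-}$ and $\mathcal{R}_{\mathcal{W}}^{+}$ are precisely the optimal fusion frame bounds of $\mathcal{W}$ (here the ``normalized version'' of $\mathcal{W}$ is $\mathcal{W}$ itself), and likewise $\mathcal{R}_{\mathcal{V}}^{-},\mathcal{R}_{\mathcal{V}}^{+}$ are the optimal bounds of $\mathcal{V}$. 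Since $\mathcal{R}_{\mathcal{W}}^{-}\geq A$ we get $\sqrt{\mathcal{R}_{\mathcal{W}}^{-}}-\mu\sqrt{N}\geq\sqrt{A}-\mu\sqrt{N}>0$, so Theorem~\ref{**} applies to $\mathcal{W}$ with $(\mathcal{R}_{\mathcal{W}}^{-},\mathcal{R}_{\mathcal{W}}^{+})$ in place of $(A,B)$ and yields that $\mathcal{V}$ is a fusion frame for ${\h}^{n}$ with $(\sqrt{\mathcal{R}_{\mathcal{W}}^{-}}-\mu\sqrt{N})^{2}$ and $(\sqrt{\mathcal{R}_{\mathcal{W}}^{+}}+\mu\sqrt{N})^{2}$ among its bounds.

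I would then make explicit the underlying estimate on the redundancy functions, which is what makes those numbers natural. From Definition~\ref{fusion pert2}, with all weights $1$, we have $\|(P_{W_i}-P_{V_i})x\|\leq\mu\|x\|$ for every $i$ and every $x\in\h$; regarding $T_{\mathcal{W}}^{*}x$ and $T_{\mathcal{V}}^{*}x$ as elements of $(\sum_{i=1}^{N}{\oplus\h})_{\ell_2}$, this gives for $x\in\SSS$
\[
\bigl|\sqrt{\mathcal{R}_{\mathcal{V}}(x)}-\sqrt{\mathcal{R}_{\mathcal{W}}(x)}\bigr|\leq\|T_{\mathcal{V}}^{*}x-T_{\mathcal{W}}^{*}x\|=\Bigl(\sum_{i=1}^{N}\|(P_{V_i}-P_{W_i})x\|^{2}\Bigr)^{1/2}\leq\mu\sqrt{N}.
\]
As $\SSS$ is compact and $\mathcal{R}_{\mathcal{W}},\mathcal{R}_{\mathcal{V}}$ are continuous, the extrema are attained; taking the infimum over $x\in\SSS$ of $\sqrt{\mathcal{R}_{\mathcal{V}}(x)}\geq\sqrt{\mathcal{R}_{\mathcal{W}}(x)}-\mu\sqrt{N}\geq\sqrt{\mathcal{R}_{\mathcal{W}}^{-}}-\mu\sqrt{N}>0$ and squaring gives $\mathcal{R}_{\mathcal{V}}^{-}\geq(\sqrt{\mathcal{R}_{\mathcal{W}}^{-}}-\mu\sqrt{N})^{2}$, while the supremum of $\sqrt{\mathcal{R}_{\mathcal{V}}(x)}\leq\sqrt{\mathcal{R}_{\mathcal{W}}^{+}}+\mu\sqrt{N}$ gives $\mathcal{R}_{\mathcal{V}}^{+}\leq(\sqrt{\mathcal{R}_{\mathcal{W}}^{+}}+\mu\sqrt{N})^{2}$.

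The step I expect to be the main obstacle is upgrading these one-sided bounds to the claimed equalities, exactly as in the frame case. The mechanism used in Theorem~\ref{3} is to identify the pair produced by the perturbation theorem with the \emph{optimal} bounds of the perturbed system, i.e.\ with its redundancies --- legitimate because, by Definition~\ref{r.z}, a redundancy is nothing but the optimal bound of the (already normalized) fusion frame --- and I would follow the same route, reading off $\mathcal{R}_{\mathcal{V}}^{-}=(\sqrt{\mathcal{R}_{\mathcal{W}}^{-}}-\mu\sqrt{N})^{2}$ and $\mathcal{R}_{\mathcal{V}}^{+}=(\sqrt{\mathcal{R}_{\mathcal{W}}^{+}}+\mu\sqrt{N})^{2}$ from Theorem~\ref{**}. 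What genuinely needs justification is the reverse inequality $\mathcal{R}_{\mathcal{V}}^{-}\leq(\sqrt{\mathcal{R}_{\mathcal{W}}^{-}}-\mu\sqrt{N})^{2}$ and its upper analogue: the estimate above only confines $\sqrt{\mathcal{R}_{\mathcal{V}}^{-}}$ to the interval $\bigl[\sqrt{\mathcal{R}_{\mathcal{W}}^{-}}-\mu\sqrt{N},\,\sqrt{\mathcal{R}_{\mathcal{W}}^{-}}+\mu\sqrt{N}\bigr]$ --- and even exploiting the symmetry of Definition~\ref{fusion pert2} in $\mathcal{W}$ and $\mathcal{V}$ one obtains nothing sharper --- so to land on the left endpoint one must know that, at a minimizer of $\mathcal{R}_{\mathcal{W}}$, the perturbation acts in the maximally norm-decreasing direction, which the hypotheses do not force. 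I would therefore present the equalities along the lines of Theorem~\ref{3} (i.e.\ under the convention that Theorem~\ref{**} returns optimal bounds), and record the two-sided estimate of the previous paragraph as what holds unconditionally.
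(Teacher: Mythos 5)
Your proposal follows the same route as the paper's own proof: since every weight equals $1$, the redundancies $\mathcal{R}_{\mathcal{W}}^{\pm}$ and $\mathcal{R}_{\mathcal{V}}^{\pm}$ are the optimal fusion frame bounds of $\mathcal{W}$ and $\mathcal{V}$ (the ``normalized versions'' coincide with the systems themselves), and Theorem \ref{**}, applied with the optimal bounds $(\mathcal{R}_{\mathcal{W}}^{-},\mathcal{R}_{\mathcal{W}}^{+})$ in place of $(A,B)$, produces exactly the two numbers appearing in the statement. The paper's proof is precisely this identification and nothing more; your explicit estimate $\bigl|\sqrt{\mathcal{R}_{\mathcal{V}}(x)}-\sqrt{\mathcal{R}_{\mathcal{W}}(x)}\bigr|\leq\|T_{\mathcal{V}}^{*}x-T_{\mathcal{W}}^{*}x\|\leq\mu\sqrt{N}$ on the unit sphere is a welcome addition that makes the mechanism behind Theorem \ref{**} visible, and it is correct.

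The reservation you raise at the end is justified, and it applies equally to the paper's own argument: Theorem \ref{**} asserts only that $(\sqrt{A}-\mu\sqrt{N})^{2}$ and $(\sqrt{B}+\mu\sqrt{N})^{2}$ are \emph{some} admissible fusion frame bounds for $\mathcal{V}$, not its optimal ones, so the argument delivers only the one-sided inequalities $\mathcal{R}_{\mathcal{V}}^{-}\geq(\sqrt{\mathcal{R}_{\mathcal{W}}^{-}}-\mu\sqrt{N})^{2}$ and $\mathcal{R}_{\mathcal{V}}^{+}\leq(\sqrt{\mathcal{R}_{\mathcal{W}}^{+}}+\mu\sqrt{N})^{2}$, which is exactly your two-sided estimate after taking extrema. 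The stated equalities are in fact false in general: $\mathcal{V}=\mathcal{W}$ is a $\mu$-perturbation of itself for every $\mu>0$, yet the claimed identity $\mathcal{R}_{\mathcal{V}}^{-}=(\sqrt{\mathcal{R}_{\mathcal{W}}^{-}}-\mu\sqrt{N})^{2}$ would then force $\mu=0$. So your instinct to record the inequalities as what holds unconditionally, and to flag the equalities as resting on an unproved optimality of the bounds returned by Theorem \ref{**}, is the right one; the paper does not supply that missing optimality either, and the same caveat in fact applies to the equalities asserted in Theorem \ref{3}.
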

\begin{proof}
 The family $\mathcal{V}$ is a  $\mu$-perturbation of the fusion frame $\mathcal{W}.$ Hence by
Theorem \ref{**}, $\mathcal{V}$ is also a fusion frame for  ${\h}^{n}$.
The subspaces in fusion frames  $\mathcal{W}$ and $\mathcal{V}$ have weights equal to 1, so they
are normalized fusion frames and therefore the optimal bounds are equal to lower and upper
redundancies. Theorem \ref{**} implies:
 $$\mathcal{R}_{\mathcal{V}}^-=(\sqrt{\mathcal{R}_{\mathcal{W}}^-}-\mu\sqrt{N})^{2},\quad
\mathcal{R}_{\mathcal{V}}^+=(\sqrt{\mathcal{R}_{\mathcal{W}}^+}+\mu\sqrt{N})^{2}. $$
\end{proof}
Let  $\mathcal{W}=\{W_i\}_{i=1}^{N}$ be a fusion frame for
${\h}^{n}.$ The lower and upper redundancies of $\mathcal{W}$ can be express as follows:\\
Let $W'$ be a subspace of ${\h}^{n}$ such that involves
the unit sphere $\SSS,$ i.e., $\SSS\subseteq W'.$
Then
$$\mathcal{R}_{\mathcal{W}}^-=\sum_{i=1}^{N}R^{2}(W', W_i),
 \quad \mathcal{R}_{\mathcal{W}}^+=\sum_{i=1}^{N}S^{2}(W', W_i) .$$
\\
\textbf{Acknowledgments:} The authors would like to thank referee(s) for valuable comments and suggestions.


\begin{thebibliography}{5}

\bibitem{stability}
R. Balan, {\it Stability theorems for Fourier frames and wavelet Riesz bases,} J. Fourier Anal. Appl. 3 (1997), 499-504.
\bibitem{2} B. G. Bodmann, P. G. Casazza and G. Kutyniok,
{\it A quantitative notion of redundancy for finite frames,} Appl.
Comput. Harmon. Anal. 30 (2011), 348-362.
\bibitem{3}
A. M. Bruckstein, D. L. Donoho and M. Elad, {\it From sparse solutions of systems of equations to sparse modeling of signals and images,} SIAM Rev. 51 (2009), 34-81.
\bibitem{casazza}
P. Casazza and G. Kutyniok, {\it Finite Frames Theory and Applications,} Applied and Numerical Harmonic Analysis. Boston, MA: Birkhauser, 2013.

\bibitem{Fusion Frames}
P. G. Casazza, G. Kutyniok and S. Li, {\it Fusion frames and distributed processing, }
Appl. Comput. Harmon. Anal. 25 (2008), 114-132.

\bibitem{6}
P. G. Casazza and G. Kutyniok, {\it Frames of subspaces,} in: Wavelets, Frames and Operator Theory (College Park, MD, 2003), Contempt. Math.  345, Amer. Math. Soc. Providence, RI. (2004), 87-113.
\bibitem{kadison}
P. G. Casazza and J. C. Tremain, {\it The Kadison-Singer problem in mathematics and engineering,} Proc. Natl. Acad. Sci. 103 (2006), 2032-2039.
\bibitem{10}
O. Christensen, {\it An introduction to frames and Riesz bases,}
Birkh\"{a}user, Boston 2003.
\bibitem{perturbation}
O. Christensen and C. Heil, {\it Perturbation of Banach frames and atomic decompositions,} Math. Nachr. 185 (1997), 33-47.
\bibitem{class}
R. Duffin and A. Schaeffer, {\it A class of nonharmonic Fourier series,} Trans. Amer. Math. Soc. 72 (1952), 341-366.
\bibitem{tight}
Y. C. Eldar and G. D. Forney, {\it Optimal tight frames and quantum measurement,} IEEE Trans. Inform. Theory 48 (2002), 599-610.
\bibitem{Gha1}
A. Ghaani Farashahi, {\it Wave packet transforms over finite cyclic groups,} Linear Algebra Appl. 489 (2016), 75--92.
\bibitem{Gha2}
3. A. Ghaani Farashahi,{\it Wave packet transform over finite fields,} Electronic Journal of Linear Algebra, Vol. 30, (2015), 507-529 .
\bibitem{Gha3}
 A. Ghaani Farashahi, {\it Cyclic wave packet transform on finite Abelian groups of prime order,} Int. J. Wavelets
Multiresolut. Inf. Process. 12 (6) (2014), 1450041 (14 pages).



\bibitem{p.s}
S. Heineken, E. Mathusika and V. Paternostro, {\it Perturbed frame sequences: Canonical dual systems, approximate reconstructions and applications}, Int. J. Wavelets Multiresolut. Inf. Process 12(2) (2014), 1450019 (19  pages)
\bibitem{operator}
V. Kaftal, D. R. Larson and S. Zhang, {\it Operator-valued frames,} Trans. Amer. Math. Soc. 361 (2009), 6349-6385.
\bibitem{optimally}
G. Kutyniok, J. Lemvig and W. Q Lim, {\it Optimally sparse approximations of 3D functions by compactly supported shearlet frames,} SIAM J. Math. Anal. 44 (2012), 2962-3017.

\bibitem{numerical}
G. Kutyniok, V. Mehrmann and P. Petersen, {\it Regularization and numerical solution of the inverse scattering problem using shearlet frames,} Journal of Inverse and Ill-posed Problems, May 2016.
\bibitem{the effect}
G. Kutyniok, F. Phillipp and V. Paternostro, {\it The effect of perturbations of frame sequences and fusion frames on their duals}, arXiv:1509.04160, 2015.
\bibitem{tensor}
L. Oeding, E. Robeva and B. Sturmfels, {\it Decomposing tensors into frames,}  Adv. Appl. Math. vol. 73 (2016), 125-–153.

\bibitem{pfander}
G. Pfander, {\it Gabor Frames in Finite Dimensions,} In G. E. Pfander, P. G. Casazza, and G. Kutyniok, editors,
Finite Frames, Applied and Numerical Harmonic Analysis, 193-239. Birkhauser Boston, 2013.

\bibitem{R.Z}
A. Rahimi, G. Zandi and B. Daraby, {\it Redundancy of fusion frames in Hilbert spaces}, Complex Anal. Oper. Theory 10 (2016), 545-–565.
\bibitem{g-frames}
W. Sun, {\it g-frames and g-Riesz bases,} J. Math. Anal. Appl. 322 (2006), 437-452.

 \end{thebibliography}
\end{document}